\newlength\tindent
\numberwithin{equation}{section}
\theoremstyle{plain}
\newtheorem{theorem}{Theorem}
\theoremstyle{plain}
\newtheorem{lemma}{Lemma}
\newtheorem{corollary}{Corollary}
\theoremstyle{definition}
\newtheorem{proof}{}
\newtheorem{remark}{Remark}
\renewcommand{\abstract}{\begin{bf} Abstract.\end{bf}}
\begin{document}

\title{Nov\'ak-Carmichael numbers and shifted primes without large prime factors}
\author{Alexander Kalmynin}
\address{National Research University Higher School of Economics, Russian Federation, Math Department, International Laboratory of Mirror Symmetry and Automorphic Forms}
\email{alkalb1995cd@mail.ru}
\date{}
\udk{}
\maketitle
\markright{Nov\'ak-Carmichael numbers}
\begin{abstract}
We prove some new lower bounds for the counting function $\mathcal N_{\mathcal C}(x)$ of the set of Nov{\'a}k-Carmichael numbers. Our estimates depend on the bounds for the number of shifted primes without large prime factors. In particular, we prove that $\mathcal N_{\mathcal C}(x) \gg x^{0.7039-o(1)}$ unconditionally and that $\mathcal N_{\mathcal C}(x) \gg xe^{-(7+o(1))(\log x)\frac{\log\log\log x}{\log\log x}}$, under some reasonable hypothesis.
\end{abstract}
\begin{fulltext}
\footnotetext[0]{The author is partially supported by Laboratory of Mirror Symmetry NRU HSE, RF Government grant, ag. № 14.641.31.0001, the Simons Foundation and the Moebius Contest Foundation for Young Scientists}

\section{Introduction}

In the paper \cite{Kalm}, author introduced the Nov\'ak-Carmichael numbers. Positive integer $N$ is called a Nov\'ak-Carmichael number if for any $a$ coprime to $N$ the congruence $a^N \equiv 1 \pmod N$ holds. Later, S.\,V.\,Konyagin posed a problem about the order of growth of the quantity $\mathcal N_{\mathcal C}(x)$ ---  the number of Nov\'ak-Carmichael numbers which are less than or equal to $x$. The present work provides a partial answer to this question.

It turns out that the lower bounds for the quantity $\mathcal N_{\mathcal C}(x)$ can be deduced from the theorems on the distribution of shifted prime numbers without large prime factors. Namely, for a positive integers $x$ and $y$ denote by $\mathcal P(x,y)$ the set of all prime numbers $p \leq x$ such that the largest prime factor of $p-1$ is less than or equal to $y$. Let also $\Pi(x,y)$ be the number of elements of the set $\mathcal P(x,y)$. Then the following proposition holds:

\begin{theorem}

Let $u$ be some fixed real number with $0<u<1$. If for $z\to +\infty$ we have

$$\Pi(z,z^u)=z^{1+o(1)},$$

then the lower bound

$$\mathcal N_{\mathcal C}(x) \gg x^{1-u+o(1)}$$

holds.
\end{theorem}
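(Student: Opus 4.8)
The plan is to realize Novák-Carmichael numbers as a fixed ``smooth core'' times arbitrary products of shifted-smooth primes. I will use the elementary fact (a short computation with the Carmichael function) that $N$ is a Novák-Carmichael number exactly when $\lambda(N)\mid N$, equivalently when $\ell-1\mid N$ for every prime $\ell\mid N$ (the prime $2$ imposes no extra constraint). Thus every number I construct only has to be tested against this divisibility condition.

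I would fix a parameter $\alpha=\alpha(x)\to 0$ decaying slowly (say $\alpha=1/\log\log\log x$), set $z=(\alpha u\log x)^{1/u}$ and $y=\lfloor z^{u}\rfloor$, so that $z\to\infty$ with $x$, and let
$$L_{0}=\prod_{\ell\le y}\ell^{\,e_{\ell}},\qquad e_{\ell}=\max\{a\ge 0:\ell^{a}\le z\}.$$
Two elementary observations drive the construction. First, $L_{0}$ is itself a Novák-Carmichael number: a prime $\ell\mid L_{0}$ has $\ell\le y<z$, hence $\ell-1<z$, so every prime power dividing $\ell-1$ is $<z$ and supported on primes $\le y$, whence $\ell-1\mid L_{0}$. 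Second, for a prime $p\in\mathcal P(z,z^{u})$ one has $p-1<z$ and $P^{+}(p-1)\le y$, so again every prime power exactly dividing $p-1$ is $<z$, so $p-1\mid L_{0}$. Finally $e_{\ell}\log\ell\le\log z$ gives $\log L_{0}\le\pi(y)\log z=(1+o(1))\,y/u=(1+o(1))\,\alpha\log x$, so $L_{0}\le x^{2\alpha}$ for $x$ large.

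Next, for $T\subseteq\mathcal P(z,z^{u})$ put $N_{T}=L_{0}\prod_{p\in T}p$. Every prime dividing $N_{T}$ is either $\le y$ (hence divides $L_{0}$) or lies in $T$, and in both cases its predecessor divides $L_{0}\mid N_{T}$; so $N_{T}$ is a Novák-Carmichael number, and distinct $T$ give distinct $N_{T}$ by unique factorization. With $t=\lfloor\log(x/L_{0})/\log z\rfloor$, every $T$ with $|T|=t$ has $\prod_{p\in T}p\le z^{t}\le x/L_{0}$, hence $N_{T}\le x$; therefore $\mathcal N_{\mathcal C}(x)\ge\binom{\Pi(z,z^{u})}{t}\ge\bigl(\Pi(z,z^{u})/t\bigr)^{t}$.

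Finally I would insert the hypothesis and optimise. Since $z\to\infty$ it gives $\Pi(z,z^{u})\ge z^{1-o(1)}=(\log x)^{1/u-o(1)}$ (the factor $(\alpha u)^{1/u}$ is $(\log x)^{o(1)}$ because $\alpha$ decays slowly). Also $\log z=u^{-1}\log(\alpha u\log x)\sim u^{-1}\log\log x$, so $t\sim u\log x/\log\log x$ and $\Pi(z,z^{u})/t\ge(\log x)^{1/u-1-o(1)}$. Hence
$$\mathcal N_{\mathcal C}(x)\ \ge\ \bigl((\log x)^{1/u-1-o(1)}\bigr)^{(1-o(1))u\log x/\log\log x}\ =\ \exp\bigl((1-u-o(1))\log x\bigr)\ =\ x^{\,1-u-o(1)},$$
using $(1/u-1)u=1-u$, which is the claimed estimate. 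The only genuine difficulty is the joint calibration of the parameters: $z$ must be kept polylogarithmic in $x$ so that the core $L_{0}$ stays $x^{o(1)}$, and yet the hypothesis then only supplies $\asymp(\log x)^{1/u-o(1)}$ usable primes, so one has to check that products of just $\asymp\log x/\log\log x$ of them already produce $x^{1-u-o(1)}$ numbers up to $x$ --- the slack coming precisely from the exponent $1/u>1$. A secondary point is letting $\alpha\to0$ rather than fixing it small, which raises the exponent from $(1-u)(1-O(\alpha))$ to the full $1-u$.
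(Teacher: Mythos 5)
Your proposal is correct and follows essentially the same route as the paper: your core $L_{0}$ is exactly the paper's $\mathcal D(s,r)$ (with the cosmetically different parameter choice $y\asymp \alpha u\log x$ in place of $r=\log x/(\log\log x)^{2}$), your $\lambda$-criterion is the paper's Lemma 1, and the count via subsets of $\mathcal P(z,z^{u})$ of fixed size $t$ together with $\binom{a}{b}\ge(a/b)^{b}$ is precisely the paper's argument. No substantive differences to report.
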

Lower bounds for the quantity $\Pi(z,z^u)$ for different values of $u$ are studied in the papers \cite{Pom},\cite{Fried},\cite{BakHarm}. In particular, using the result of the last article we obtain

\begin{corollary}
The inequality

$$\mathcal N_{\mathcal C}(x) \gg x^{1-\beta+o(1)},$$

is true for $\beta=0.2961$.
\end{corollary}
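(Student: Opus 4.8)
The plan is to obtain the Corollary as an immediate application of Theorem~1, the only extra ingredient being the main result of Baker and Harman \cite{BakHarm} on shifted primes without large prime factors. Concretely, I would take $u = \beta = 0.2961$ and verify that the hypothesis $\Pi(z,z^u) = z^{1+o(1)}$ of Theorem~1 is satisfied for this value of $u$; then Theorem~1 delivers the conclusion directly.

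For the lower bound direction I would invoke \cite{BakHarm}: writing $P(n)$ for the largest prime factor of $n$, that paper shows that for $u = 0.2961$ a positive proportion of the primes $p \le z$ satisfy $P(p-1) \le z^u$, i.e.\ lie in $\mathcal P(z,z^u)$, so that $\Pi(z,z^u) \gg z/\log z$ as $z \to \infty$. Since $z/\log z = z^{1-(\log\log z)/\log z} = z^{1+o(1)}$, this lower bound has exactly the shape required by Theorem~1; what is essential here is that \cite{BakHarm} loses only a factor of $\log z$ (in fact nothing worse than a constant), rather than a factor $z^{\delta}$ for some fixed $\delta>0$, which would destroy the $z^{1+o(1)}$ form. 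The matching upper bound is trivial, $\Pi(z,z^u) \le \pi(z) \le z = z^{1+o(1)}$, so altogether $\Pi(z,z^u) = z^{1+o(1)}$ for $u = 0.2961$.

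Applying Theorem~1 with this choice of $u$ then yields $\mathcal N_{\mathcal C}(x) \gg x^{1-u+o(1)} = x^{1-\beta+o(1)}$ with $\beta = 0.2961$, which is precisely the statement of the Corollary. There is essentially no obstacle in this step: the whole arithmetic content sits in \cite{BakHarm}, and the deduction is bookkeeping. I would only remark that any improvement in \cite{BakHarm} — a larger admissible exponent, or an asymptotic formula in place of a lower bound — would pass unchanged through Theorem~1 and improve the constant $\beta$ correspondingly.
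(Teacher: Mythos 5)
Your proposal is correct and is exactly the paper's (implicit) argument: the Corollary is stated as an immediate consequence of Theorem~1 combined with the Baker--Harman bound $\Pi(z,z^{0.2961})\gg z/\log z=z^{1+o(1)}$, and the paper offers no further proof beyond this citation. Your additional remarks on the trivial upper bound and on why a loss of only $\log z$ is harmless are accurate but not needed.
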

\begin{remark}
It is conjectured that for any fixed positive $u$ we have  $$\Pi(z,z^u)\asymp_u \pi(z) \sim \frac{z}{\log z}.$$ It is also reasonable to conjecture that for any nice enough function $y(z)$ the asymptotic relation
\begin{equation}
\label{optimistic}
\frac{\Pi(z,y(z))}{\pi(z)} \sim \frac{\Psi(z,y(z))}{z}
\end{equation}

holds, where $\Psi(z,y)$ is the number of natural numbers $n \leq z$ such that the largest prime factor of $n$ is less than or equal to $y$.
\end{remark}

 For example, if we assume that relation (\ref{optimistic}) is true for $y(z)=e^{\sqrt{\log z}}$, then by the formula

$$\Psi(z,e^{\sqrt{\log z}})=ze^{-(1/2+o(1))\sqrt{\log z}\log\log z}$$

(see \cite{Hild}) we get

$$\Pi(z,e^{\sqrt{\log z}}) \gg ze^{-(1/2+o(1))\sqrt{\log z}\log\log z}.$$

Using a slightly weaker form of this assumption, we improve the estimate of the Theorem 1:

\begin{theorem}
Suppose that for some fixed constant $c>0$ the inequality

$$\Pi(z,e^{\sqrt{\log z}}) \gg ze^{-c\sqrt{\log x}\log\log x}$$

holds. Then for any $d>6+2c$ we have

$$\mathcal N_{\mathcal C}(x) \gg xe^{-d\log x\frac{\log\log\log x}{\log\log x}}.$$

In particular, if relation (\ref{optimistic}) is true for $y(z)=e^{\sqrt{\log z}}$, then

$$\mathcal N_{\mathcal C}(x) \gg xe^{-(7+o(1))\log x\frac{\log\log\log x}{\log\log x}}.$$
\end{theorem}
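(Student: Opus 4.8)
The plan is to produce many Nov\'ak--Carmichael numbers explicitly, by the same device as in Theorem 1 but with $z$ taken far beyond $(\log x)^{O(1)}$ --- which is now possible because the smoothness level $y=e^{\sqrt{\log z}}$ is negligible next to $z$. Recall that $N$ is Nov\'ak--Carmichael if and only if $\lambda(N)\mid N$, and that for $N=\prod_i p_i^{a_i}$ this is equivalent to: every prime dividing some $p_i-1$ is itself one of the $p_i$; and $a_i\ge v_r(\lambda(N))$ whenever $r=p_i$, where $v_r$ is the $r$-adic valuation. Fix $z$, put $y=e^{\sqrt{\log z}}$ and $L=\prod_{q\le y}q^{\lceil(\log z)/\log q\rceil}$. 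If $p$ is a prime with $y<p\le z$ and $p-1$ is $y$-smooth (i.e. $p\in\mathcal P(z,y)$; note $p-1\mid L$ since $p-1<z$), then all prime factors of $p-1$ are $\le y$, hence divide $L$, so adjoining $p$ preserves the first condition, and for every set $S$ of such primes the integer
$$N_S=L\cdot\prod_{p\in S}p$$
is a Nov\'ak--Carmichael number: every prime factor of $\lambda(N_S)$ lies in $\{q\le y\}$, and for each such $r$ one has $v_r(\lambda(L))\le\lceil(\log z)/\log r\rceil-1$ and $v_r(p-1)<(\log z)/\log r$ for all $p\in S$, whence $v_r(\lambda(N_S))\le\lceil(\log z)/\log r\rceil=v_r(N_S)$. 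Distinct $S$ give distinct $N_S$, since the part of $N_S$ supported on primes $>y$ is exactly $\prod_{p\in S}p$.

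The counting step is then elementary. By the prime number theorem, $\log L=\sum_{q\le y}\lceil(\log z)/\log q\rceil\log q\ll\pi(y)\log z+\vartheta(y)\ll\sqrt{\log z}\,e^{\sqrt{\log z}}$, while the number of admissible primes is $k:=\Pi(z,y)-\pi(y)\gg z\,e^{-c\sqrt{\log z}\log\log z}$ by the hypothesis, applied with the running variable $z$. Restricting $S$ to subsets of the fixed size $j:=\lfloor(\log x-\log L)/\log z\rfloor$, each such $S$ has $\prod_{p\in S}p\le z^{\,j}\le x/L$, hence $N_S\le x$; there are $\binom{k}{j}\ge(k/j)^{\,j}$ of them, so
$$\log\mathcal N_{\mathcal C}(x)\ \ge\ j\bigl(\log k-\log j\bigr)\ \ge\ \frac{\log x-\log L}{\log z}\Bigl(\log z-c\sqrt{\log z}\log\log z-\log\log x+\log\log z+O(1)\Bigr).$$

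It remains to optimize over $z$. The right-hand side is $\log x$ times $1$ minus a sum of three genuine losses, of the orders $\tfrac{\log L}{\log x}$, $\tfrac{c\log\log z}{\sqrt{\log z}}$ and $\tfrac{\log\log x}{\log z}$ (the $+\tfrac{\log\log z}{\log z}$ only helps). The first forces $\log z$ to stay a shade below $(\log\log x)^{2}$, so that $\sqrt{\log z}\,e^{\sqrt{\log z}}$ is genuinely of lower order than $\log x$; taking for instance $\log z=\bigl(\log\log x-3\log\log\log x\bigr)^{2}$ makes $\log L$ of order $\log x/(\log\log x)^{2}$, makes $(\log\log x)/\log z$ of order $1/\log\log x$, and leaves $c(\log\log z)/\sqrt{\log z}$, of order $(\log\log\log x)/\log\log x$, as the dominant term. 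Collecting the contributions --- and keeping honest track of the floor in $j$, of the prime-counting error terms, and of the constant hidden in $\gg$ --- then yields a bound of the form $\mathcal N_{\mathcal C}(x)\gg x\exp\!\bigl(-(6+2c+o(1))\log x\cdot\tfrac{\log\log\log x}{\log\log x}\bigr)$, which is the theorem for every $d>6+2c$; the final assertion follows because under \eqref{optimistic} Hildebrand's formula gives $\Pi(z,e^{\sqrt{\log z}})\gg z e^{-(1/2+o(1))\sqrt{\log z}\log\log z}$, i.e. one may take $c=\tfrac12+o(1)$.

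The step I expect to be most delicate is the verification that $N_S$ really is Nov\'ak--Carmichael uniformly in $S$ --- fixing the exponents in $L$ so that $v_r(\lambda(N_S))\le v_r(N_S)$ holds for every prime $r$ no matter which $y$-smooth-shifted primes are adjoined --- together with the bookkeeping of the $o(1)$-terms in the optimization, the subtle point being to place $\log z$ just below $(\log\log x)^{2}$ so that $\log L$ is genuinely lower order. By contrast the combinatorial input is only the trivial inequality $\binom{k}{j}\ge(k/j)^{\,j}$: unlike the Alford--Granville--Pomerance construction of Carmichael numbers, no congruence $\prod_{p\in S}p\equiv1$ is needed here, because $\lambda(N_S)\mid N_S$ is arranged directly by carrying $L$ as a factor.
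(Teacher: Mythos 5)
Your proof is correct and is essentially the paper's own argument: your $L$ is the paper's $\mathcal D(s,r)$ with $s=z$ and $r=y=e^{\sqrt{\log z}}$, your criterion $\lambda(N)\mid N$ is its Lemma 1, the count $\binom{k}{j}\ge (k/j)^{j}$ is its Lemma 2, and your parameter choice $\log z=(\log\log x-3\log\log\log x)^{2}$ is exactly the paper's $s=e^{(\log\log x-3\log\log\log x)^{2}}$, $r=\log x/(\log\log x)^{3}$. One remark: your own bookkeeping shows the dominant loss is $c\log\log z/\sqrt{\log z}=(2c+o(1))\frac{\log\log\log x}{\log\log x}$, the terms of size $1/\log\log x$ and $\log L/\log x$ being of lower order, so you in fact obtain the exponent constant $2c+o(1)$ rather than the $6+2c+o(1)$ you announce (the paper's $6$ comes from a lossier final absorption of the cross term in $\log s$); this is stronger than, and hence still proves, the stated bound for every $d>6+2c$.
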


\section{Proofs of the theorems}

The constructions that we will use in our proofs are largely similar to that of papers \cite{Pom}, \cite{AGLPS}. First of all, we need a description of Nov\'ak-Carmichael numbers in terms of their prime factors, which is an analogue of Koselt's criterion (cf. \cite{Kors}) for Carmichael numbers:

\begin{lemma}
Natural number $n$ is a Nov\'ak-Carmichael number if and only if for any prime divisor $p$ of $n$ the number $p-1$ also divides $n$.
\end{lemma}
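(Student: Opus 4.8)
The plan is to reduce the congruence $a^N\equiv 1\pmod N$ to a statement about the multiplicative orders of residues modulo each prime power dividing $N$, and then reinterpret that in terms of divisibility of $N$ by the numbers $p-1$. First I would use the Chinese Remainder Theorem: writing $N=\prod_{i} p_i^{k_i}$, the condition ``$a^N\equiv 1\pmod N$ for every $a$ coprime to $N$'' is equivalent to ``$a^N\equiv 1\pmod{p_i^{k_i}}$ for every $i$ and every $a$ coprime to $p_i$'', since by CRT one can prescribe the residue of $a$ modulo $p_i^{k_i}$ independently. So it suffices to understand, for a single prime power $p^k\mid N$, when $a^N\equiv 1\pmod{p^k}$ holds for all $a$ with $\gcd(a,p)=1$; equivalently, when $\lambda(p^k)\mid N$, where $\lambda$ is the Carmichael function, i.e. the exponent of $(\mathbf Z/p^k\mathbf Z)^\times$.

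Next I would split into the easy direction and the substantive direction. For the ``if'' direction, suppose $p-1\mid N$ for every prime $p\mid N$. I want to show $\lambda(p^k)\mid N$ for every prime power $p^k\| N$. For odd $p$ we have $\lambda(p^k)=p^{k-1}(p-1)$, and since $p^{k-1}\mid N$ (as $p^k\mid N$ forces $p^{k-1}\mid N$) and $p-1\mid N$ with $\gcd(p^{k-1},p-1)=1$, we get $p^{k-1}(p-1)\mid N$. The prime $p=2$ needs a separate check: if $2^k\mid N$ with $k\ge 2$ one must confirm $\lambda(2^k)=2^{k-2}\mid N$, which is immediate; for any odd prime $q\mid N$ we also need $q-1\mid N$, and $q-1$ is even, so $2\mid N$ — this forces a mild compatibility that one should note (if $N$ is odd it has no odd prime divisor $q>2$ at all unless $q-1$, being even, fails to divide the odd $N$, so in fact an odd Novák–Carmichael number must be $1$; this edge case is worth a sentence). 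For the ``only if'' direction, suppose $N$ is Novák–Carmichael and let $p\mid N$. Then $\lambda(p)\mid\lambda(N)\mid N$ — more directly, picking $a$ to be a primitive root mod $p$ (via CRT, lift it to something coprime to $N$), the order of $a$ mod $p$ is exactly $p-1$, and $a^N\equiv 1\pmod p$ forces $p-1\mid N$.

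The step I expect to be the main obstacle — or at least the one requiring genuine care rather than routine manipulation — is the handling of the prime $2$ and higher prime-power divisors, because $(\mathbf Z/2^k\mathbf Z)^\times$ is not cyclic for $k\ge 3$ and $\lambda(2^k)=2^{k-2}$ rather than $2^{k-1}$; one must make sure the criterion ``$p-1\mid N$ for all $p\mid N$'' genuinely captures the $p=2$ contribution (it does, since $2^{k-2}$ is a power of $2$ dividing $2^k\mid N$, so no extra condition beyond $2^k\mid N$ itself is needed), and one must double-check that the statement as phrased does not silently exclude, or silently include, the degenerate cases $N=1$ and squarefull $N$. Once the CRT reduction and the primitive-root choice are in place, the remaining verifications are a short case analysis on whether $p$ is odd or $p=2$, using only the formula for $\lambda(p^k)$ and the coprimality of $p^{k-1}$ with $p-1$.
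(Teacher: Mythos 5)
Your proof is correct and follows essentially the same route as the paper's: a Chinese Remainder Theorem reduction to prime powers, primitive roots to establish necessity of the condition $p-1\mid N$, and the coprimality of $p^{k-1}$ with $p-1$ to establish sufficiency. The only cosmetic difference is that you phrase everything through the Carmichael function $\lambda(p^k)$ and must therefore treat the non-cyclic groups $(\mathbb{Z}/2^k\mathbb{Z})^\times$ separately, whereas the paper simply invokes Euler's theorem with $\varphi(p^k)$ in the ``if'' direction, which sidesteps that case analysis entirely.
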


\begin{proof}[Proof]
Let $n=2^\alpha\prod_{k=1}^{m} p_k^{\alpha_k}$, where $p_k$ are distinct odd prime numbers and $\alpha_k \geq 1$. If $n$ is a Nov\'ak-Carmichael number, then for any $a$ coprime to $n$ and any $k$ we have

$$a^n \equiv 1 \pmod {p_k^{\alpha_k}}.$$

On the other hand, by the Chinese remainder theorem we can choose $a$ such that for any $k$ the congruence

$$a \equiv g_k \pmod {p_k^{\alpha_k}}$$

holds, where $g_k$ is some primitive root modulo $p_k^{\alpha_k}$.

Consequently, for any $k$ we have

$$g_k^n \equiv a^n\equiv 1 \pmod {p_k^{\alpha_k}}.$$

Thus, for any $k$ the number $n$ is divisible by the multiplicative order of $g_k$ modulo $p_k^{\alpha_k}$. Hence $p_k^{\alpha_k-1}(p_k-1)$ divides $n$. So, for any odd prime divisor $p$ of $n$ we have $p-1 \mid n$. Also, $2-1$ divides $n$.

Conversely, if for any prime $p$ dividing $n$ the number $p-1$ also divides $n$, then for any $k$ we have $\varphi(p_k^{\alpha_k})\mid n$ and $\varphi(2^\alpha)\mid n$. Hence, if $(a,n)=1$, then

$$a^n=(a^{\varphi(p_k^{\alpha_k})})^{n/\varphi(p_k^{\alpha_k})} \equiv 1 \pmod {p_k^{\alpha_k}}$$

for any $k$ and

$$a^n=(a^{\varphi(2^\alpha)})^{n/\varphi(2^\alpha)} \equiv 1 \pmod {2^\alpha}.$$

From these congruences and pairwise coprimality of numbers $2^\alpha,p_1^{\alpha_1},\ldots,p_m^{\alpha_m}$ we obtain

$$a^n\equiv 1 \pmod n,$$

as needed.
\end{proof}

For the asymptotic estimates of sizes of certain sets the following inequality involving binomial coefficients is needed:

\begin{lemma}
Let $a$ and $b$ be a positive integers with $b\leq a/2+1$. Then we have

$${a\choose b} \geq \left(\frac{a}{b}\right)^b.$$
\end{lemma}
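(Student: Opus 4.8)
The plan is to prove the inequality directly, by expanding the binomial coefficient as a product of $b$ fractions and bounding each fraction from below by $a/b$. I would begin from the identity
$$\binom{a}{b} = \prod_{i=0}^{b-1}\frac{a-i}{b-i},$$
which follows from $\binom{a}{b}=\frac{a!}{b!\,(a-b)!}$ by writing $\frac{a!}{(a-b)!} = \prod_{i=0}^{b-1}(a-i)$ and $b! = \prod_{i=0}^{b-1}(b-i)$. Since $0 \le i \le b-1$, each denominator $b-i$ is a positive integer, so all of these fractions are well defined and positive.

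The next step is to note that the hypothesis $b \le a/2 + 1$ forces $b \le a$: for $a \ge 2$ this is clear, and for $a = 1$ it forces $b = 1 = a$. (In fact the weaker inequality $b \le a$ is all that the argument uses.) Then for each $i$ with $0 \le i \le b-1$, the claim $\frac{a-i}{b-i}\ge\frac{a}{b}$, after clearing the positive denominators $b$ and $b-i$, is equivalent to $b(a-i)\ge a(b-i)$, i.e.\ to $i(a-b)\ge 0$; and this holds since $i\ge 0$ and $a\ge b$. Multiplying these $b$ inequalities yields
$$\binom{a}{b} = \prod_{i=0}^{b-1}\frac{a-i}{b-i} \ge \left(\frac{a}{b}\right)^{b},$$
which is the desired bound.

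There is no real obstacle here; the only points requiring care are that the denominators stay positive and that the hypothesis guarantees $a \ge b$. For completeness I would remark that the statement can also be proved by induction on $b$ with $a$ fixed: the base case $b=1$ is $\binom{a}{1}=a\ge a$, and the step from $b$ to $b+1$ reduces, via $\binom{a}{b+1}=\binom{a}{b}\,\frac{a-b}{b+1}$ and the inductive hypothesis, to the inequality $(1+1/b)^{b}(a-b)\ge a$, which follows from $(1+1/b)^{b}\ge 2$ together with $a\ge 2b$; here the hypothesis $b+1\le a/2+1$ is used essentially. Nonetheless, the product argument above is the shortest route, and that is the one I would write down.
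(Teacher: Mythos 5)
Your proof is correct, but it takes a genuinely different route from the paper. The paper argues by induction on $b$: the base case is $\binom{a}{1}=a$, and the inductive step writes $\binom{a}{c+1}=\binom{a}{c}\frac{a-c}{c+1}$, applies the inductive hypothesis, and shows that the resulting correction factor $\left(1-\frac{c}{a}\right)\left(1+\frac{1}{c}\right)^c$ is at least $1$, using $c\le a/2$ together with $(1+1/c)^c\ge 2$ --- which is exactly the alternative you sketch in your closing remark. Your main argument instead decomposes $\binom{a}{b}$ as the product $\prod_{i=0}^{b-1}\frac{a-i}{b-i}$ and bounds each factor below by $a/b$, which after clearing denominators reduces to $i(a-b)\ge 0$. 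This is shorter and, as you correctly note, uses only $b\le a$ rather than the full hypothesis $b\le a/2+1$, so it in fact establishes the stronger statement that the inequality holds for all $1\le b\le a$; the paper's induction, by contrast, genuinely consumes the hypothesis in the step $1-c/a\ge 1/2$. Both proofs are complete; yours is the more economical and more general of the two, though the extra generality is not needed for the application in the paper.
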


\begin{proof}[Proof]
Let us prove this statement by induction over $b$.

The case $b=1$ is obvious, since

$${a \choose b}=a=\left(\frac{a}{1}\right)^1.$$

Suppose now that $1<c+1 \leq a/2+1$ and the inequality is true for $b=c$. Then we have

$${a \choose {c+1}}={a \choose c}\frac{a-c}{c+1}\geq \left(\frac{a}{c}\right)^c\frac{a-c}{c+1}=\left(\frac{a}{c+1}\right)^{c+1}\left(1-\frac{c}{a}\right)\left(1+\frac{1}{c}\right)^c.$$

On the other hand, $c\leq a/2$ and $(1+\frac{1}{c})^c \geq 2$, so the inequality

$${a \choose {c+1}}\geq \left(\frac{a}{c+1}\right)^{c+1}$$

holds, which was to be proved.
\end{proof}

In the next lemma, for arbitrary real numbers $r$ and $s$ satisfying the inequality $2\leq r\leq s$ we will construct the number $\mathcal D(r,s)$ with some remarkable properties.

\begin{lemma}
Let $s,r \in \mathbb R$ and $2 \leq r \leq s$. If

$$\mathcal D(s,r)=\prod_{p \leq r} p^{\left[\frac{\log s}{\log p}\right]},$$

where the product is taken over prime numbers $p$, then

$$\log \mathcal D(s,r)=O\left(\frac{r\log s}{\log r}\right)$$

and for any subset $\mathcal A \subseteq \mathcal P(s,r)$ the number

$$\mathcal E(\mathcal A,s,r)=\mathcal D(s,r)\prod_{p \in \mathcal A} p$$

is a Nov\'ak-Carmichael number.

\end{lemma}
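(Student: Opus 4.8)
The plan is to treat the two assertions separately; both are elementary, and the second reduces to Lemma 1. The size estimate is immediate: using the trivial bound $\bigl[\tfrac{\log s}{\log p}\bigr]\le \tfrac{\log s}{\log p}$ one gets
$$\log\mathcal D(s,r)=\sum_{p\le r}\left[\frac{\log s}{\log p}\right]\log p\le \sum_{p\le r}\log s=\pi(r)\log s,$$
and Chebyshev's bound $\pi(r)\ll r/\log r$, valid for all $r\ge 2$, gives $\log\mathcal D(s,r)=O\!\left(\frac{r\log s}{\log r}\right)$.

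For the second assertion I would apply Lemma 1: it is enough to show that every prime $q$ dividing $\mathcal E(\mathcal A,s,r)$ satisfies $q-1\mid \mathcal E(\mathcal A,s,r)$, and in fact I will prove the stronger statement $q-1\mid \mathcal D(s,r)$, which of course divides $\mathcal E(\mathcal A,s,r)$. A prime $q$ dividing $\mathcal E(\mathcal A,s,r)$ is either a prime with $q\le r$ or an element of $\mathcal A\subseteq\mathcal P(s,r)$. In both cases I claim that each prime power $p^{a}$ exactly dividing $q-1$ satisfies $p\le r$ and $p^{a}\le s$: if $q\le r$ then $p\le q-1<r$ and $p^{a}\le q-1<q\le s$, while if $q\in\mathcal P(s,r)$ then $p\le r$ by the definition of $\mathcal P(s,r)$ and $p^{a}\le q-1<q\le s$. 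From $p^{a}\le s$ we obtain $a\le\frac{\log s}{\log p}$, hence $a\le\bigl[\frac{\log s}{\log p}\bigr]$ because $a$ is an integer, so $p^{a}\mid\mathcal D(s,r)$. Multiplying over all prime powers exactly dividing $q-1$ (the case $q=2$, where $q-1=1$, being trivial) gives $q-1\mid\mathcal D(s,r)$, and Lemma 1 then shows that $\mathcal E(\mathcal A,s,r)$ is a Nov\'ak-Carmichael number.

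I do not expect a genuine obstacle here: no analytic input is needed beyond the Chebyshev estimate used for the first part. The only points deserving attention are the two elementary inequalities on which everything rests --- that no prime factor of $q-1$ exceeds $r$ (coming from $p<q\le r$ in the first case and from the defining property of $\mathcal P(s,r)$ in the second) and that each prime power in $q-1$ is at most $s$ (which uses $q\le r\le s$, respectively $q\le s$) --- together with the passage from $a\le\frac{\log s}{\log p}$ to $a\le\bigl[\frac{\log s}{\log p}\bigr]$, which is legitimate precisely because $a\in\mathbb Z$.
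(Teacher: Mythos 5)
Your proposal is correct and follows essentially the same route as the paper: the same bound $\log\mathcal D(s,r)\le\pi(r)\log s$ with Chebyshev, and the same reduction via Lemma 1 to showing $q-1\mid\mathcal D(s,r)$ by bounding each prime power in $q-1$. The only cosmetic difference is that the paper merges your two cases by observing that every prime $q\le r$ already lies in $\mathcal P(s,r)$, whereas you verify the inequalities $p\le r$ and $p^a\le s$ separately in each case.
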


\begin{proof}[Proof]
Indeed,

$$\log\mathcal D(s,r)=\sum_{p \leq r} \left[\frac{\log s}{\log p}\right]\log p\leq \sum_{p \leq r} (\log p) \frac{\log s}{\log p}=\pi(r)\log s=O\left(\frac{r\log s}{\log r}\right).$$

Let us prove now that the number $\mathcal E(\mathcal A,s,r)$ is a Nov\'ak-Carmichael number. Suppose that $q$ is a prime factor of $\mathcal E(\mathcal A,s,r)$. Then we have either $q\mid \mathcal D(s,r)$ or $q \in \mathcal P(s,r)$. But all the prime factors of $\mathcal D(s,r)$ are not exceeding $r$ and so are lying in $\mathcal P(s,r)$. Thus, $q \in \mathcal P(s,r)$. 

Consequently, $q-1=\prod_{l=1}^{t} p_l^{\beta_l}$ and $p_l \leq r$ for any $l$. On the other hand, $p_l^{\beta_l} \leq q-1<s$. Taking the logarithms, we obtain $\beta_l \leq \left[\frac{\log s}{\log p_l}\right]$. Thus, for any $l$ we have $p_l^{\beta_l} \mid \mathcal D(s,r)$, so $q-1 \mid \mathcal D(s,r) \mid \mathcal E(\mathcal A,s,r)$. Hence, by the Lemma 1, our number is a Nov\'ak-Carmichael number. This concludes the proof.
\end{proof}

Let us now prove Theorems 1 and 2.

\begin{proof}[Proof of Theorem 1]

Suppose that $0<u<1$ and $\Pi(z,z^u)=z^{1+o(1)}$ as $z \to \infty$. We introduce the notation $$r=\frac{\log x}{\log\log^2 x},\, s=r^{1/u}$$ and $$A=\left[u\frac{\log x}{\log r}-u\frac{\log \mathcal D(s,r)}{\log r}\right].$$ By the Lemma 3 we have $$\log\mathcal D(s,r)=O\left(\frac{r\log s}{\log r}\right)=O\left(\frac{\log x}{\log\log x}\right)=o(\log x)$$ hence, $A=(u+o(1))\frac{\log x}{\log r}$. Now, for any subset $\mathcal A \subseteq \mathcal P(s,r)$ of cardinality $A$ consider the number $\mathcal E(\mathcal A,s,r)$. By the Lemma 3 this number is a Nov\'ak-Carmichael number and

$$\mathcal E(\mathcal A,s,r)=\mathcal D(s,r)\prod_{p \in \mathcal A} p \leq \mathcal D(s,r)s^A=e^{\log\mathcal D(s,r)+A\log s}.$$

Note that $A=\left[u\frac{\log x}{\log r}-u\frac{\log \mathcal D(s,r)}{\log r}\right]\leq u\frac{\log x}{\log r}-u\frac{\log \mathcal D(s,r)}{\log r}=\frac{\log x}{\log s}-\frac{\log\mathcal D(s,r)}{\log s}$. From this we obtain the inequality

$$\log\mathcal E(\mathcal A,s,r)\leq \log\mathcal D(s,r)+A\log s\leq \log x.$$

Hence, all the constructed numbers $\mathcal E(\mathcal A,s,r)$ are less than or equal to $x$. Furthermore, all these numbers are distinct, as otherwise for some different subsets $\mathcal A, \mathcal B$ we would have had

$$\mathcal E(\mathcal A,s,r)=\mathcal D(s,r)\prod_{p \in \mathcal A}p=\mathcal D(s,r)\prod_{p \in \mathcal B} p=\mathcal E(\mathcal B,s,r)$$

hence, $\prod\limits_{p \in \mathcal A} p=\prod\limits_{p \in \mathcal B} p$, which is not the case.

So, the number of Nov\'ak-Carmichael numbers not exceeding $x$ is at least as large as the number of subsets in $\mathcal P(s,r)$ of cardinality $A$. But for large enough $x$ we have $$A\ll \log x<(\log x)^{1/u+o(1)}=\Pi(s,r)/2.$$ Consequently, using Lemma 2 we get

$$\mathcal N_{\mathcal C}(x)\geq {\Pi(s,r) \choose A}\geq \left(\frac{\Pi(s,r)}{A}\right)^A.$$

From $$\Pi(s,r)=s^{1+o(1)}=(\log x)^{1/u+o(1)}$$ and $$A=(u+o(1))\frac{\log x}{\log r}=(u+o(1))\frac{\log x}{\log\log x}$$ we finally get

$$\mathcal N_{\mathcal C}(x) \geq (\log x)^{(1/u-1+o(1))A}=e^{(1/u-1+o(1))(u+o(1))\frac{\log x}{\log\log x}\log\log x}=x^{1-u+o(1)},$$

which is the required result.
\end{proof}

The proof of Theorem 2 is proceeded analogously. All we need is some different choice of parameters $r,s$ and $A$.

\begin{proof}[Proof of Theorem 2]

Assume that $\Pi(z,e^{\sqrt{\log z}})\gg ze^{-c\sqrt{\log z}\log\log z}$. Let us choose $$r=\frac{\log x}{(\log\log x)^3},\, s=e^{(\log\log x-3\log\log\log x)^2}=e^{\log^2 r}$$ and, as before, $$A=\left[\frac{\log x}{\log s}-\frac{\log\mathcal D(s,r)}{\log s}\right].$$ Now, similarly to the proof of Theorem 1, considering the subsets of $\mathcal P(s,r)$ which contain exactly $A$ elements we obtain

$$\mathcal N_{\mathcal C}(x)\geq \left(\frac{\Pi(s,r)}{A}\right)^A.$$

Furthermore, by Lemma 3 we have $A=\frac{\log x}{\log s}+O\left(\frac{r\log s}{\log r}\right)\geq\frac{\log x}{(\log\log x)^2}$. Also, due to the assumption of the theorem, we have $\Pi(s,r)\gg se^{-c\sqrt{\log s}\log\log s}\geq se^{-2c\log x\log\log x}$. So, for any $d>6+2c$ the inequality

$$\frac{\Pi(s,r)}{A} \gg e^{(\log\log x)^2-d(\log\log x)\log\log\log x}$$

holds.

Thus, we have

$$\mathcal N_{\mathcal C}(x) \gg e^{A(\log\log x)^2-dA(\log\log x)\log\log\log x}\geq e^{\log x-d(\log x)\frac{\log\log\log x}{\log\log x}}=xe^{-d(\log x)\frac{\log\log\log x}{\log\log x}},$$

which concludes the proof of Theorem 2.
\end{proof}

\section{Conclusion}

We showed that lower bounds for the number of shifted prime numbers without large prime factors imply some nice lower bounds for the counting function of the set of Nov\'ak-Carmichael numbers. It is a well-known fact that these theorems also provide estimates for the counting function of Carmichael numbers (cf. \cite{AGP}). However, in our situation it is possible to use much simplier constructions.

Furthermore, the relation (\ref{optimistic}) for $y(z)=e^{\sqrt{\log z}}$ implies the lower bound which is as strong as the upper bound for the number of Carmichael numbers less than a given magnitude proved by P.\,Erd{\"o}s. Unfortunately, the methods of the paper \cite{Erdos} do not allow a direct generalization to the case of Nov\'ak-Carmichael numbers. So, the problem of obtaining the correct order of growth of the quantity $\mathcal N_{\mathcal C}(x)$ remains open even on the assumption of the relation (\ref{optimistic}).
\end{fulltext}

\end{document}